\newtheorem{theorem}{Theorem}[section]
\newtheorem{lemma}[theorem]{Lemma}
\newtheorem{corollary}[theorem]{Corollary}
\theoremstyle{definition}
\theoremstyle{remark}
\numberwithin{equation}{section}
\begin{document}
\setcounter{page}{1}

\title[Conservative algebras]{Conservative algebras of $2$-dimensional algebras, III}

\author[F. Arzikulov]{Farhodjon Arzikulov$^{1,2}$}

\address{$^{1}$ V.I. Romanovskiy Institute of Mathematics, Namangan Regional Department, Uzbekistan Academy of Sciences}
\address{$^{2}$ Department of Mathematics, Andizhan State University, Andizhan, Uzbekistan.}
\email{\textcolor[rgb]{0.00,0.00,0.84}{arzikulovfn@rambler.ru}}

\author[N. Umrzaqov]{Nodirbek Umrzaqov$^3$}

\address{$^{3}$ Department of Mathematics, Andizhan State University, Andizhan, Uzbekistan.}
\email{\textcolor[rgb]{0.00,0.00,0.84}{umrzaqov2010@mail.ru}}

\subjclass[2010]{16W25, 46L57, 47B47, 17C65, 17A30}

\begin{abstract}
In the present paper we prove that every local and $2$-local
derivation on conservative algebras of $2$-dimensional algebras are derivations.
Also, we prove that every local and $2$-local
automorphism on conservative algebras of $2$-dimensional algebras are automorphisms.
\end{abstract}

\keywords{Conservative algebra, derivation, local derivation, $2$-local derivation, 
automorphism, local automorphism, $2$-local automorphism}

\maketitle

\footnotetext{}

\section{Introduction}

The present paper is devoted to the study of conservative algebras.
In 1972 Kantor \cite{KI2} introduced conservative algebras as a generalization of Jordan algebras
(also, see a good written survey about the study of conservative algebras \cite{pp20}).

In 1990 Kantor \cite{KI} defined the multiplication $\cdot$ on the set of all algebras
(i.e. all multiplications) on the $n$-dimensional vector space $V_n$
over a field ${ \mathbb F}$ of characteristic zero as follows: $A\cdot B=[L^A_e,B]$,
where $A$ and $B$ are multiplications and $e\in V_n$ is some fixed vector. If $n>1$, then the algebra
$W(n)$ does not belong to any well-known class of algebras (such as associative, Lie, Jordan, or
Leibniz algebras). The algebra $W(n)$ is a conservative algebra \cite{KI2}.

In \cite{KI2} Kantor classified all conservative $2$-dimensional algebras and defined the class of terminal algebras as algebras
satisfying some certain identity. He proved that every terminal algebra is a conservative algebra
and classified all simple finite-dimensional terminal algebras with left quasi-unit over an
algebraically closed field of characteristic zero \cite{KI3}. Terminal algebras were also studied in \cite{KKS,KKP}.

In 2017 Kaygorodov and Volkov \cite{KV} described automorphisms, one-sided ideals, and idempotents of $W(2)$. Also a similar problem
is solved for the algebra $W_2$ of all commutative algebras on the $2$-dimensional vector space
and for the algebra $S_2$ of all commutative algebras with zero multiplication trace on the
$2$-dimensional vector space. The papers \cite{KLP,KPP} are also devoted to the study of conservative
algebras and superalgebras.

Let $\mathcal{A}$ be an algebra. A linear operator $\nabla$ on $\mathcal{A}$ is called a local
derivation if for every $x\in \mathcal{A}$ there exists a derivation $\phi_x$ of $\mathcal{A}$,
depending on $x$, such that $\nabla(x)=\phi_x(x)$.
The history of local derivations had begun from the paper of Kadison \cite{k90}.
Kadison introduced the concept of local derivation and proved
that each continuous local derivation from a von Neumann algebra into its dual Banach
bimodule is a derivation.

A similar notion, which characterizes nonlinear generalizations of derivations, was introduced by  \v{S}emrl as  $2$-local derivations.
In his paper \cite{S} was proved that a $2$-local derivation of the algebra
$B(H)$ of all bounded linear operators on the infinite-dimensional
separable Hilbert space $H$ is a derivation.
After his works, appear numerous new results related to the description of local and $2$-local derivations of associative algebras
(see, for example, \cite{AA2}, \cite{AK}, \cite{AK2}, \cite{kh19}, \cite{KK}, \cite{LW}).

The study of local and $2$-local derivations of non-associative algebras was initiated in some
papers of  Ayupov and  Kudaybergenov (for the case of Lie algebras, see \cite{AyupovKudaybergenov,ak16}).
In particular, they proved that there are no non-trivial local and $2$-local derivations on semisimple finite-dimensional Lie algebras.
In \cite{AyuKudRak16} examples of $2$-local derivations on nilpotent Lie algebras which are not derivations, were also given.
Later, the study of local and $2$-local derivations was continued for Leibniz algebras \cite{akoz}, Malcev algebras and Jordan algebras \cite{aa17}.
Local automorphisms and $2$-local automorphisms, also were studied in many cases,
for example, they were studied on Lie algebras \cite{AyupovKudaybergenov,Costantini}.

Now, a linear operator $\nabla$ on $\mathcal{A}$ is called a local
automorphism if for every $x\in \mathcal{A}$ there exists an automorphism $\phi_x$ of $\mathcal{A}$,
depending on $x$, such that $\nabla(x)=\phi_x(x)$. The concept of local automorphism was introduced
by Larson and Sourour \cite{LarsonSourour} in 1990. They proved that, invertible local automorphisms of the algebra
of all bounded linear operators on an infinite-dimensional Banach space $X$ are automorphisms.

A similar notion, which characterizes non-linear generalizations of automorphisms, was introduced
by  \v{S}emrl in \cite{S} as $2$-local automorphisms.
Namely, a map $\Delta :\mathcal{A}\to \mathcal{A}$ (not necessarily linear) is called a $2$-local automorphism, if
for every $x,y\in \mathcal{A}$ there exists an automorphism $\phi_{x,y}: \mathcal{A}\to \mathcal{A}$
such that $\Delta(x)=\phi_{x,y}(x)$ and $\Delta(y)=\phi_{x,y}(y)$.
After the work of \v{S}emrl, it appeared numerous new results related to the description of local and $2$-local automorphisms of algebras
(see, for example, \cite{AyupovKudaybergenov}, \cite{akoz}, \cite{ChenWang}, \cite{Costantini}, \cite{KK}).

In the present paper, we continue the study of derivations, local and $2$-local derivations of conservative algebras of $2$-dimensional algebras.
We prove that every local and  $2$-local derivation of the conservative algebras of $2$-dimensional algebras are derivations.
In the present paper, we continue the study of automorphisms, local and $2$-local automorphisms in the case of
conservative algebras of $2$-dimensional algebras. We prove that every local and  $2$-local automorphism of
the conservative algebras of $2$-dimensional algebras are automorphisms.

\section{Preliminaries}

Throughout this paper ${ \mathbb F}$ is some fixed field of characteristic zero.
A multiplication on $2$-dimensional vector space is defined by a $2\times 2\times 2$ matrix. Their
classification was given in many papers (see, for example, \cite{P}). Let consider the space $W(2)$ of all
multiplications on the $2$-dimensional space $V_2$ with a basis $v_1$, $v_2$. The definition of the
multiplication $\cdot$ on the algebra $W(2)$ is defined as follows: we fix the vector $v_1\in V_2$
and define
$$
(A\cdot B)(x,y)=A(v_1,B(x,y))-B(A(v_1,x),y)-B(x,A(v_1,y))
$$
for $x$, $y\in V_2$ and $A$, $B\in W(2)$. The algebra $W(2)$ is conservative \cite{KI}.
Let consider the multiplications $\alpha^k_{i,j}$ ($i$, $j$, $k=1,2$) on $V_2$ defined by the formula
$\alpha^k_{i,j}(v_t,v_l)=\delta_{it}\delta_{jl}v_k$ for all $t$, $l\in \{1,2\}$. It is easy to see that
$\{\alpha^k_{i,j}\vert i,j,k=1,2\}$ is a basis of the algebra $W(2)$. The
multiplication table of $W(2)$ in this basis is given in \cite{KLP}. In this work we use another basis for
the algebra $W(2)$ (from \cite{KV}). Let introduce the notation
$$
e_1=\alpha^1_{11}-\alpha^2_{12}-\alpha^2_{21}, \,\,
e_2=\alpha^2_{11}, \,\, e_3=\alpha^2_{22}-\alpha^1_{12}-\alpha^1_{21}, \,\,
e_4=\alpha^1_{22}, \,\, e_5=2\alpha^1_{11}+\alpha^2_{12}+\alpha^2_{21},
$$
$$
e_6=2a^2_{22}+\alpha^1_{12}+\alpha^1_{21}, \,\,
e_7=\alpha^1_{12}-\alpha^1_{21}, \,\, e_8=\alpha^2_{12}-\alpha^2_{21}.
$$
It is easy to see that the multiplication table of $W(2)$ in the basis $e_1,\dots,e_8$ is the following.

\begin{center}
\begin{tabular}{|c|c|c|c|c|c|c|c|c|}
  \hline
   & $e_1$ & $e_2$ & $e_3$ & $e_4$ & $e_5$ & $e_6$ & $e_7$ & $e_8$ \\ \hline
  $e_1$ & $-e_1$ & $-3e_2$ & $e_3$ & $3e_4$ & $-e_5$ & $e_6$ & $e_7$ & $-e_8$ \\ \hline
  $e_2$ & $3e_2$ & $0$ & $2e_1$ & $e_3$ & $0$ & $-e_5$ & $e_8$ & $0$ \\ \hline
  $e_3$ & $-2e_3$ & $-e_1$ & $-3e_4$ & $0$ & $e_6$ & $0$ & $0$ & $-e_7$ \\ \hline
  $e_4$ & $0$ & $0$ & $0$ & $0$ & $0$ & $0$ & $0$ & $0$ \\ \hline
  $e_5$ & $-2e_1$ & $-3e_2$ & $-e_3$ & $0$ & $-2e_5$ & $-e_6$ & $-e_7$ & $-2e_8$ \\ \hline
  $e_6$ & $2e_3$ & $e_1$ & $3e_4$ & $0$ & $-e_6$ & $0$ & $0$ & $e_7$ \\ \hline
  $e_7$ & $2e_3$ & $e_1$ & $3e_4$ & $0$ & $-e_6$ & $0$ & $0$ & $e_7$ \\ \hline
  $e_8$ & $0$ & $e_2$ & $-e_3$ & $-2e_4$ & $0$ & $-e_6$ & $-e_7$ & $0$ \\ \hline
\end{tabular}
\end{center}

The subalgebra generated by the elements $e_1,\dots,e_6$ is the conservative (and, moreover,
terminal) algebra $W_2$ of commutative $2$-dimensional algebras. The subalgebra generated by the
 elements $e_1,\dots,e_4$ is the conservative (and, moreover, terminal) algebra $S_2$ of all
commutative $2$-dimensional algebras with zero multiplication trace \cite{KLP}.

Let $\mathcal{A}$ be an algebra.
A linear map $D : \mathcal{A}\to \mathcal{A}$ is called
a derivation, if $D(xy)=D(x)y+xD(y)$ for any two elements $x$, $y\in \mathcal{A}$.

Our main tool for study of local and $2$-local derivations of
the algebras $S_2$, $W_2$ and $W(2)$ is the following lemma \cite[Theorem 6]{KLP},
where the matrix of a derivation is calculated in the new basis $e_1,\dots,e_8$.

\begin{lemma} \label{3}
A linear map $D:W(2)\to W(2)$ is a
derivation if and only if the matrix of $D$ has the following matrix form:
\begin{equation} \label{(1.3)}
\left(
    \begin{array}{cccccccc}
      0 & \alpha & 0 & 0 & 0 & 0 & 0 & 0\\
      0 & -\beta & 0 & 0 & 0 & 0 & 0 & 0\\
      2\alpha & 0 & \beta & 0 & 0 & 0 & 0 & 0\\
      0 & 0 & 3\alpha & 2\beta & 0 & 0 & 0 & 0 \\
      0 & 0 & 0 & 0 & 0 & 0 & 0 & 0\\
      0 & 0 & 0 & 0 & -\alpha & \beta & 0 & 0\\
      0 & 0 & 0 & 0 & 0 & 0 & \beta & \alpha\\
      0 & 0 & 0 & 0 & 0 & 0 & 0 & 0\\
      \end{array}
  \right),
\end{equation}
where $\alpha$, $\beta$ are elements in ${ \mathbb F}$.
\end{lemma}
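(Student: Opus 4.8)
The plan is to prove both implications by a direct computation with the multiplication table of $W(2)$ in the basis $e_1,\dots,e_8$. Throughout, write the matrix of a linear map $D\colon W(2)\to W(2)$ as $(d_{kj})_{k,j=1}^{8}$, so that $D(e_j)=\sum_{k=1}^{8}d_{kj}e_k$; by bilinearity, $D$ is a derivation if and only if $D(e_ie_j)=D(e_i)e_j+e_iD(e_j)$ for all $i,j\in\{1,\dots,8\}$.

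For the ``if'' direction, one reads \eqref{(1.3)} column by column, obtaining $D(e_1)=2\alpha e_3$, $D(e_2)=\alpha e_1-\beta e_2$, $D(e_3)=\beta e_3+3\alpha e_4$, $D(e_4)=2\beta e_4$, $D(e_5)=-\alpha e_6$, $D(e_6)=\beta e_6$, $D(e_7)=\beta e_7$, $D(e_8)=\alpha e_7$, and then checks the $64$ identities $D(e_ie_j)=D(e_i)e_j+e_iD(e_j)$ against the table. Each is a short computation, and many are trivial because $e_4e_j=0$ for every $j$, because $L_{e_1}$ is diagonal in this basis, and because the products among $e_4,\dots,e_8$ are sparse.

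For the ``only if'' direction I would exploit the $\mathbb{Z}$-grading
$$
W(2)=W_{-1}\oplus W_0\oplus W_1\oplus W_2,\qquad W_{-1}=\langle e_2\rangle,\quad W_0=\langle e_1,e_5,e_8\rangle,\quad W_1=\langle e_3,e_6,e_7\rangle,\quad W_2=\langle e_4\rangle,
$$
whose defining inclusions $W_iW_j\subseteq W_{i+j}$ are read off from the table. Any derivation $D$ decomposes as $D=\sum_{n}D^{(n)}$ with $D^{(n)}(W_i)\subseteq W_{i+n}$, and comparing homogeneous parts of $D(xy)=D(x)y+xD(y)$ shows that each $D^{(n)}$ is itself a derivation. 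Only the degrees $-1,0,1,2$ occur, so $n$ runs over $\{-3,-2,-1,0,1,2,3\}$. When $n\notin\{0,1\}$ the component $D^{(n)}$ has very few candidate nonzero entries, and a couple of derivation identities force $D^{(n)}=0$; for instance, for $n=3$ we have $D^{(3)}(e_k)=0$ for $k\neq2$ and $D^{(3)}(e_2)=\lambda e_4$, and applying $D^{(3)}$ to $e_1e_2=-3e_2$ gives $-3\lambda e_4=3\lambda e_4$, so $\lambda=0$. For $n=0$ and $n=1$ one is left with small linear systems, and solving them produces one-parameter families equal, respectively, to the $\beta$-part and the $\alpha$-part of \eqref{(1.3)}; adding the two components yields the stated matrix, including the relations $d_{31}=2d_{12}$, $d_{43}=3d_{12}$, $d_{65}=-d_{12}$, $d_{78}=d_{12}$, $d_{22}=-d_{33}$, $d_{44}=2d_{33}$, $d_{66}=d_{77}=d_{33}$, and vanishing of all remaining entries.

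The only genuine difficulty is bookkeeping: one must pick a small, well-ordered family of derivation identities that pins down every matrix entry without circularity, and then confirm that the remaining identities are automatically satisfied. The grading is what makes this manageable, splitting the $64$-unknown problem into a few independent systems, each in only three or four unknowns, and isolating the two degrees where all the freedom lives.
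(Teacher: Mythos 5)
The paper does not actually prove this lemma: it is imported verbatim as \cite[Theorem~6]{KLP}, so there is no in-paper argument to compare against, and your proposal is a genuinely independent, self-contained route. Your strategy checks out. The grading you propose, with $\deg e_2=-1$, $\deg e_1=\deg e_5=\deg e_8=0$, $\deg e_3=\deg e_6=\deg e_7=1$, $\deg e_4=2$, is indeed compatible with the multiplication table (I verified all products; note also that $L_{e_1}$ acts on $W_i$ as the scalar $2i-1$, which is what makes your degree-by-degree elimination so efficient: applying $D^{(n)}$ to $e_1x=(2i-1)x$ gives $-2n\,D^{(n)}(x)=D^{(n)}(e_1)\,x$, killing $n\le -2$ and $n=3$ immediately and reducing $n=-1,2$ to one extra identity each). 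The decomposition of a derivation of a graded algebra into homogeneous components, each again a derivation, is standard; your sample computation for $n=3$ is correct; and the asserted endpoint matches the lemma, since the $\beta$-family in \eqref{(1.3)} is exactly $\beta$ times the grading derivation $x\mapsto(\deg x)x$ (hence automatically a derivation) while the $\alpha$-family is the degree-$(+1)$ component. What your approach buys over a brute-force solution of the $64$ bilinear conditions is exactly the modularity you describe. The only thing keeping this from being a complete proof is that the $n=0$ and $n=1$ linear systems and the verification of the ``if'' direction are asserted rather than carried out; for $n=0$ in particular, the identity $D^{(0)}(e_1)x=0$ alone does not force $D^{(0)}(e_1)=0$ (the algebra has a one-dimensional space of left annihilators inside $W_0$), so you do need to write down and solve the small system there rather than rely on the $L_{e_1}$ trick. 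Those are finite, checkable computations whose outcomes I spot-checked and found consistent with \eqref{(1.3)}, so the plan is sound.
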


Now, we give a characterization of automorphisms on conservative algebras of $2$-dimensional algebras.

Let $\mathcal{A}$ be an algebra.
A bijective linear map $\phi : \mathcal{A}\to \mathcal{A}$ is called
an automorphism, if $\phi(xy)=\phi(x)\phi(y)$ for any elements $x$, $y\in \mathcal{A}$.

Our principal tool for study of local and $2$-local automorphisms of the algebras
$S_2$, $W_2$ and $W(2)$ is the following lemma, which was proved in \cite[Theorem 11]{KV}.

\begin{lemma} \label{21}
A linear map $\phi:W(2)\to W(2)$ is an
automorphism if and only if the matrix of $\phi$ has the following matrix form:
\begin{equation} 
\left(
    \begin{array}{cccccccc}
      1 & a & 0 & 0 & 0 & 0 & 0 & 0\\
      0 & \frac{1}{b} & 0 & 0 & 0 & 0 & 0 & 0\\
      2ab & a^2b & b & 0 & 0 & 0 & 0 & 0\\
      3a^2b^2 & a^3b^2 & 3ab^2 & b^2 & 0 & 0 & 0 & 0 \\
      0 & 0 & 0 & 0 & 1 & 0 & 0 & 0\\
      0 & 0 & 0 & 0 & -ab & b & 0 & 0\\
      0 & 0 & 0 & 0 & 0 & 0 & b & ab\\
      0 & 0 & 0 & 0 & 0 & 0 & 0 & 1\\
      \end{array}
  \right),
\end{equation}
where $a$, $b$ are elements in ${ \mathbb F}$ and $b\neq 0$.
\end{lemma}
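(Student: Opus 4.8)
I would prove the two implications separately, working throughout in the basis $e_1,\dots,e_8$ with the multiplication table above. For the "if" direction it suffices to verify $\phi(e_ie_j)=\phi(e_i)\phi(e_j)$ for all $i,j\in\{1,\dots,8\}$ when the matrix of $\phi$ has the displayed shape; after substituting from the table each of these $64$ identities becomes a polynomial identity in $a,b$ that one checks directly, and since the matrix is block diagonal with each $4\times4$ diagonal block of determinant $b^{2}$, one has $\det\phi=b^{4}\neq0$, so $\phi$ is bijective. A structural explanation of the shape: any $g\in GL(V_2)$ with $g(v_1)=v_1$ induces an automorphism $\phi_g$ of $W(2)$ via $(\phi_gA)(x,y)=g\bigl(A(g^{-1}x,g^{-1}y)\bigr)$ — since $L^{\phi_gA}_{v_1}=g\,L^A_{v_1}\,g^{-1}$ the map $\phi_g$ intertwines the product $A\cdot B=[L^A_{v_1},B]$ — and writing $g=\bigl(\begin{smallmatrix}1&a\\0&b\end{smallmatrix}\bigr)$ in the basis $v_1,v_2$ (so $b\neq0$) and re-expressing $\phi_g$ in the basis $e_1,\dots,e_8$ reproduces exactly the matrix in the statement.

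For the "only if" direction let $\phi$ be an arbitrary automorphism with matrix $P=(p_{ij})$; the plan is to kill most entries of $P$ using subspaces preserved by every automorphism, then solve for the rest. The subspace $N=\{x\in W(2):xW(2)=0\}$ is basis-free, so $\phi(N)=N$ automatically, and from the table $N=\langle e_4,\,e_3+e_6,\,e_3+e_7,\,-2e_1+e_5-3e_8\rangle$. One then shows that the members $S_2\subset W_2$ of the chain $S_2\subset W_2\subset W(2)$, as well as the $4$-dimensional left ideal $\langle e_5,e_6,e_7,e_8\rangle$, are characteristic; granting this, $\phi$ fixes $N\cap S_2=\langle e_4\rangle$ and the unique $2$-dimensional $W(2)$-submodule $\langle e_3,e_4\rangle$ of $S_2$ (an automorphism carries left ideals contained in $S_2$ to left ideals contained in $S_2$), and on the complementary ideal $\langle e_5,\dots,e_8\rangle$ it likewise preserves the analogous distinguished subspaces $\langle e_5,e_6\rangle=\langle e_5,\dots,e_8\rangle\cap W_2$, $\langle e_7,e_8\rangle$ and $\langle e_6,e_7\rangle$. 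Each such invariance annihilates a block of entries of $P$, leaving $P$ block lower-triangular with only a few unknowns. These are pinned down by evaluating $\phi$ on a handful of products: $e_1e_3=e_3$ forces $\phi(e_1)=e_1+\ast$ and $\phi(e_3)=b\,e_3+\ast$ for a scalar $b$, then $e_3e_3=-3e_4$ gives $\phi(e_4)=b^{2}e_4$, $e_2e_3=2e_1$ gives $\phi(e_2)=b^{-1}e_2+\ast$ (so $b\neq0$ by injectivity), and the relations coming from $e_1e_4=3e_4$, $e_3e_5=e_6$ and $e_1e_8=-e_8$ collapse all the surviving off-diagonal freedom to a single parameter $a\in\mathbb F$; solving then reproduces the displayed matrix.

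The real work is in the "only if" direction. First, one must honestly prove that $S_2$, $W_2$ and $\langle e_5,\dots,e_8\rangle$ are preserved by \emph{every} automorphism and not merely by the $\phi_g$ above, without circular reasoning — for $N$ this is free, but for the others it demands an intrinsic description, e.g.\ via the lattice of left ideals of $W(2)$; this is the technical heart of the argument. (Equivalently one could restrict $\phi$ to these characteristic subalgebras and quote the known descriptions of $\mathrm{Aut}(S_2)$ and $\mathrm{Aut}(W_2)$, then extend — but that route rests on the same invariance.) Second, even after the reduction the defining relations are cubic in the $p_{ij}$, so the residual system is genuinely nonlinear and must be steered carefully to the full two-parameter solution with no spurious branches. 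A useful consistency check is that the family $\{\phi_{a,b}\}$ is closed under composition and inversion — it is isomorphic to the $2$-dimensional group $\{g\in GL(V_2):g(v_1)=v_1\}$ — and that its one-parameter subgroups integrate exactly the derivations from Lemma~\ref{3}, so the $2$-dimensional derivation algebra is the "tangent space" of this family, which makes it plausible that no further automorphisms exist.
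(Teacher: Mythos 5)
First, a point of comparison: the paper does not prove Lemma~\ref{21} at all --- it is imported verbatim from Kaygorodov--Volkov \cite[Theorem 11]{KV} --- so there is no in-paper proof to measure your argument against. Judged on its own terms, your ``if'' direction is sound: verifying the $64$ products is routine, the determinant computation $\det\phi=b^{4}\neq 0$ is correct, and your structural explanation --- that the stabilizer $\{g\in GL(V_2):g(v_1)=v_1\}=\{\bigl(\begin{smallmatrix}1&a\\0&b\end{smallmatrix}\bigr):b\neq 0\}$ acts on $W(2)$ by automorphisms because $L^{\phi_g A}_{v_1}=g\,L^{A}_{v_1}\,g^{-1}$ --- is exactly the right way to see where the two-parameter family comes from, and is how \cite{KV} produces these maps. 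Your computation of the annihilator $N=\langle e_4,\ e_3+e_6,\ e_3+e_7,\ -2e_1+e_5-3e_8\rangle$ also checks out against the multiplication table.

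The gap is the one you flag yourself: the ``only if'' direction hinges entirely on $S_2$, $W_2$ and $\langle e_5,\dots,e_8\rangle$ being invariant under an \emph{arbitrary} automorphism, and you never establish this. ``One then shows that \dots are characteristic'' is precisely the content that needs proving; without an intrinsic, basis-free characterization of these subspaces (say through the lattice of one-sided ideals, or through constructions built from $N$), the block-triangularization of $P$, the claim that $\phi$ fixes $\langle e_4\rangle$ and $\langle e_3,e_4\rangle$, and the treatment of the complementary block are all unsupported. Likewise the final step (``solving then reproduces the displayed matrix'') is asserted rather than carried out, and since the defining relations are cubic in the entries of $P$ one must genuinely exclude spurious solution branches rather than appeal to the dimension count against Lemma~\ref{3}, which is only a plausibility check. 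As written this is a credible programme, not a proof; to complete it you must either supply the invariance arguments and the explicit resolution of the residual nonlinear system, or do what the paper does and cite \cite[Theorem 11]{KV}.
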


\section{Local derivations of conservative algebras of $2$-dimensional algebras}

In this section we give a characterization of derivations on conservative algebras of $2$-dimensional algebras.

Let $\mathcal{A}$ be an algebra.
A linear map $\nabla : \mathcal{A}\to \mathcal{A}$ is called a local derivation, if for
any element $x\in \mathcal{A}$ there exists a derivation
$D_x:\mathcal{A}\to \mathcal{A}$ such that $\nabla(x)=D_x(x)$.

\begin{theorem}\label{5}
Every local derivation of the algebra $W(2)$ is a derivation.
\end{theorem}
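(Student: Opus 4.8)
The plan is to use Lemma~\ref{3} as an explicit description of the $7$-parameter-free structure of derivations: every derivation $D$ of $W(2)$ is determined by the two scalars $\alpha,\beta$ via the matrix \eqref{(1.3)}. I would let $\nabla$ be a local derivation, write its matrix $N=(n_{ij})_{i,j=1}^{8}$ in the basis $e_1,\dots,e_8$, and aim to show $N$ has the form \eqref{(1.3)}. The key observation is that the defining property of a local derivation says: for each $x$, the vector $\nabla(x)=Nx$ coincides with $D_x(x)$ for some derivation $D_x$, i.e. $Nx$ lies in the (two-dimensional-in-parameters, but $x$-dependent) set $\{M_{\alpha,\beta}\,x : \alpha,\beta\in\mathbb F\}$, where $M_{\alpha,\beta}$ is the matrix \eqref{(1.3)}.

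The main work is then a finite sequence of substitutions $x=e_i$ and $x=e_i+e_j$. First I would run $x=e_i$ for $i=1,\dots,8$: each tells us that the $i$-th column of $N$ equals the $i$-th column of some $M_{\alpha_i,\beta_i}$. Reading off the rows, this already forces the vast majority of entries of $N$ to vanish (all entries outside the support of \eqref{(1.3)}) and forces the surviving entries in each column to be governed by a single pair $(\alpha_i,\beta_i)$ — e.g. from $x=e_2$ we get $n_{12}=\alpha_2$, $n_{22}=-\beta_2$ with everything else in that column zero; from $x=e_1$ we get $n_{31}=2\alpha_1$ with $n_{11}=0$; from $x=e_3$ we get $n_{33}=\beta_3$, $n_{43}=3\alpha_3$; from $x=e_4$, $n_{44}=2\beta_4$; from $x=e_6$, $n_{56}\!=\!0,\ n_{66}=\beta_6$; from $x=e_5$, $n_{65}=-\alpha_5$; from $x=e_7$, $n_{77}=\beta_7$; from $x=e_8$, $n_{78}=\alpha_8$; and $x=e_4$ also kills column $4$ down to its one entry, etc. The remaining task is to show all these local parameters agree: $\alpha_1=\alpha_3=\alpha_5=\alpha_8=:\alpha$ and $\beta_2=\beta_3=\beta_4=\beta_6=\beta_7=:\beta$, and that the cross-entries $n_{31}=2\alpha$, $n_{43}=3\alpha$, $n_{78}=\alpha$ are consistent. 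This is done by feeding in sums $e_i+e_j$: applying $N$ to $e_i+e_j$ must again lie in the derivation-image set, and since a single derivation $D_{e_i+e_j}=M_{\alpha,\beta}$ must simultaneously reproduce the relevant entries, the linear relations among the columns of \eqref{(1.3)} (for instance the fact that the $e_1$- and $e_3$-columns share the same $\alpha$, and the $e_2$-, $e_3$-, $e_4$-columns share the same $\beta$) force the per-column parameters to coincide. Concretely one uses pairs such as $e_1+e_3$, $e_2+e_3$, $e_3+e_4$, $e_5+e_6$, $e_7+e_8$, and a couple more to link the two chains, reading off a small linear system in the $\alpha_i,\beta_j$ whose only solution is the common value.

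I expect the main obstacle to be purely bookkeeping: organizing the eight single-generator computations and the handful of two-generator computations so that it is transparent that no ``rogue'' off-support entry of $N$ can survive and that the parameter-matching system is genuinely forced (rather than merely consistent). In particular one must be slightly careful that for a sum $e_i+e_j$ the derivation $D_{e_i+e_j}$ is a \emph{single} matrix $M_{\alpha,\beta}$, so its action on $e_i$ and on $e_j$ is tied together — this is exactly what propagates the equality of parameters across columns. Once the linear system is assembled, solving it is immediate, and the conclusion is that $N=M_{\alpha,\beta}$ for a single pair $(\alpha,\beta)$, hence $\nabla$ is a derivation by Lemma~\ref{3}. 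The same strategy (replacing \eqref{(1.3)} by the appropriate submatrix) yields the corresponding statements for $W_2$ and $S_2$.
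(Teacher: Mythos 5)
Your proposal is correct and takes essentially the same route as the paper: evaluate at the basis vectors $e_i$ to kill every entry of the matrix outside the support of \eqref{(1.3)}, then evaluate at elements with two nonzero coordinates (the paper's coordinate-product manipulations are exactly your $e_i+e_j$ substitutions, with $e_2$ serving as the linking element since its column carries both parameters $\alpha$ and $\beta$) to force all the per-column parameters to a common pair, and conclude via Lemma~\ref{3}.
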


\begin{proof}
Let $\nabla$ be an arbitrary local derivation of $W(2)$ and write
\[
\nabla(x)=B\bar{x}, x\in W(2),
\]
where $B=(b_{i,j})_{i,j=1}^8$, $\bar{x}=(x_1,x_2,x_3,x_4,x_5,x_6,x_7,x_8)$ is the vector corresponding to $x$.
Then for every $x\in W(2)$ there exist elements $a_x$, $b_x$ in ${ \mathbb F}$ such that
\[
B\bar{x}=
\left(
    \begin{array}{cccccccc}
      0 & a_x & 0 & 0 & 0 & 0 & 0 & 0\\
      0 & -b_x & 0 & 0 & 0 & 0 & 0 & 0\\
      2a_x & 0 & b_x & 0 & 0 & 0 & 0 & 0\\
      0 & 0 & 3a_x & 2b_x & 0 & 0 & 0 & 0 \\
      0 & 0 & 0 & 0 & 0 & 0 & 0 & 0\\
      0 & 0 & 0 & 0 & -a_x & b_x & 0 & 0\\
      0 & 0 & 0 & 0 & 0 & 0 & b_x & a_x\\
      0 & 0 & 0 & 0 & 0 & 0 & 0 & 0\\
      \end{array}
  \right)
\left(
  \begin{array}{c}
    x_1 \\
    x_2 \\
    x_3 \\
    x_4 \\
    x_5 \\
    x_6 \\
    x_7 \\
    x_8 \\
  \end{array}
\right).
\]
In other words
\[
\left\{
  \begin{array}{ll}
    b_{1,1}x_1+b_{1,2}x_2+b_{1,3}x_3+b_{1,4}x_4+b_{1,5}x_5+b_{1,6}x_6+b_{1,7}x_7+b_{1,8}x_8=a_xx_2 \hbox{;} \\
    b_{2,1}x_1+b_{2,2}x_2+b_{2,3}x_3+b_{2,4}x_4+b_{2,5}x_5+b_{2,6}x_6+b_{2,7}x_7+b_{2,8}x_8=-b_xx_2 \hbox{;} \\
    b_{3,1}x_1+b_{3,2}x_2+b_{3,3}x_3+b_{3,4}x_4+b_{3,5}x_5+b_{3,6}x_6+b_{3,7}x_7+b_{3,8}x_8=2a_xx_1+b_xx_3 \hbox{;} \\
    b_{4,1}x_1+b_{4,2}x_2+b_{4,3}x_3+b_{4,4}x_4+b_{4,5}x_5+b_{4,6}x_6+b_{4,7}x_7+b_{4,8}x_8=3a_xx_3+2b_xx_4 \hbox{;} \\
    b_{5,1}x_1+b_{5,2}x_2+b_{5,3}x_3+b_{5,4}x_4+b_{5,5}x_5+b_{5,6}x_6+b_{5,7}x_7+b_{5,8}x_8=0 \hbox{;} \\
    b_{6,1}x_1+b_{6,2}x_2+b_{6,3}x_3+b_{6,4}x_4+b_{6,5}x_5+b_{6,6}x_6+b_{6,7}x_7+b_{6,8}x_8=-a_xx_5+b_xx_6 \hbox{;} \\
    b_{7,1}x_1+b_{7,2}x_2+b_{7,3}x_3+b_{7,4}x_4+b_{7,5}x_5+b_{7,6}x_6+b_{7,7}x_7+b_{7,8}x_8=b_xx_7+a_xx_8 \hbox{;} \\
    b_{8,1}x_1+b_{8,2}x_2+b_{8,3}x_3+b_{8,4}x_4+b_{8,5}x_5+b_{8,6}x_6+b_{8,7}x_7+b_{8,8}x_8=0 \hbox{.}
  \end{array}
\right.
\]
Taking $x=(1,0,0,0,0,0,0,0)$, $x=(0,0,1,0,0,0,0,0)$, $x=(0,0,0,1,0,0,0,0)$, etc,
from this it follows that
\[
b_{1,1}=b_{1,3}=b_{1,4}=b_{1,5}=b_{1,6}=b_{1,7}=b_{1,8}=
\]
\[
=b_{2,1}=b_{2,3}=b_{2,4}=b_{2,5}=b_{2,6}=b_{2,7}=b_{2,8}
\]
\[
=b_{3,2}=b_{3,4}=b_{3,5}=b_{3,6}=b_{3,7}=b_{3,8}
\]
\[
=b_{4,1}=b_{4,2}=b_{4,5}=b_{4,6}=b_{4,7}=b_{4,8}
\]
\[
=b_{5,1}=b_{5,2}=b_{5,3}=b_{5,4}=b_{5,5}=b_{5,6}=b_{5,7}=b_{5,8}
\]
\[
=b_{6,1}=b_{6,2}=b_{6,3}=b_{6,4}=b_{6,7}=b_{6,8}
\]
\[
=b_{7,1}=b_{7,2}=b_{7,3}=b_{7,4}=b_{7,5}=b_{7,6}
\]
\[
=b_{8,1}=b_{8,2}=b_{8,3}=b_{8,4}=b_{8,5}=b_{8,6}=b_{8,7}=b_{8,8}=0.
\]
Then for every $x\in W(2)$ there exist elements $a_x$, $b_x$ in ${ \mathbb F}$ such that
\begin{equation} \label{c}
\left\{
  \begin{array}{ll}
    b_{1,2}x_2=a_xx_2 \hbox{;} \\
    b_{2,2}x_2=-b_xx_2 \hbox{;} \\
    b_{3,1}x_1+b_{3,3}x_3=2a_xx_1+b_xx_3 \hbox{;} \\
    b_{4,3}x_3+b_{4,4}x_4=3a_xx_3+2b_xx_4 \hbox{;} \\
    b_{6,5}x_5+b_{6,6}x_6=-a_xx_5+b_xx_6 \hbox{;} \\
    b_{7,7}x_7+b_{7,8}x_8=b_xx_7+a_xx_8 \hbox{.}
  \end{array}
\right.
\end{equation}

Using $1$-th and $3$-th equalities of system (\ref{c}) we get
\[
\left\{
  \begin{array}{ll}
2b_{1,2} x_1x_2 = 2a_x x_1x_2 \hbox{;} \\
b_{3,1} x_1x_2+b_{3,3} x_2x_3 = 2a_x x_1x_2+b_x x_2x_3 \hbox{.}
  \end{array}
\right.
\]
and
\[
(b_{3,1}-2b_{1,2})x_1x_2+b_{3,3}x_2x_3=b_xx_2x_3.
\]
Hence, $b_{3,1}=2b_{1,2}$. Similarly, using equalities of (\ref{c}) we get
\[
b_{4,3}=3b_{1,2}, b_{2,2}=-b_{3,3}, b_{4,4}=-2b_{2,2}.
\]
Using $1$-th and $5$-th equalities of system (\ref{c}) we get
\[
\left\{
  \begin{array}{ll}
b_{1,2} x_2x_5 = a_x x_2x_5 \hbox{;} \\
b_{6,5} x_5x_2+b_{6,6} x_6x_2 = -a_x x_5x_2+b_x x_6x_2 \hbox{.}
  \end{array}
\right.
\]
and
\[
(b_{6,5}+b_{1,2})x_2x_5+b_{6,6}x_6x_2=b_xx_6x_2.
\]
Hence, $b_{6,5}=-b_{1,2}$.

Using $2$-th and $5$-th equalities of system (\ref{c}) we get
\[
\left\{
  \begin{array}{ll}
b_{2,2} x_2x_6 = -b_x x_2x_6 \hbox{;} \\
b_{6,5} x_5x_2+b_{6,6} x_6x_2 = -a_x x_5x_2+b_x x_6x_2 \hbox{.}
  \end{array}
\right.
\]
and
\[
b_{6,5} x_5x_2+(b_{6,6}+b_{2,2})x_6x_2=-a_xx_5x_2.
\]
Hence, $b_{6,6}=-b_{2,2}$.

Using $1$-th and $6$-th equalities of system (\ref{c}) we get
\[
\left\{
  \begin{array}{ll}
b_{1,2} x_2x_8 = a_x x_2x_8 \hbox{;} \\
b_{7,7} x_7x_2+b_{7,8} x_8x_2 = b_x x_7x_2+a_x x_8x_2 \hbox{.}
  \end{array}
\right.
\]
and
\[
b_{7,7}x_7x_2+(b_{7,8}-b_{1,2})x_8x_2=b_xx_7x_2.
\]
Hence, $b_{7,8}=b_{1,2}$.

Using $2$-th and $6$-th equalities of system (\ref{c}) we get
\[
\left\{
  \begin{array}{ll}
b_{2,2} x_2x_7 = -b_x x_2x_7 \hbox{;} \\
b_{7,7} x_7x_2+b_{7,8} x_8x_2 = b_x x_7x_2+a_x x_8x_2 \hbox{.}
  \end{array}
\right.
\]
and
\[
(b_{7,7}+b_{2,2})x_7x_2+b_{7,8}x_8x_2=a_xx_8x_2.
\]
Hence, $b_{7,7}=-b_{2,2}$.

These equalities show that the matrix of the linear map $\nabla$
is of the form (\ref{(1.3)}). Therefore, by lemma \ref{3}
$\nabla$ is a derivation. This completes the proof.
\end{proof}

Since a derivation on $W(2)$ is invariant on the subalgebras $S_2$ and $W_2$, we have
the following corollary.

\begin{corollary}
Every local derivation of the algebras $S_2$ and $W_2$ is a derivation.
\end{corollary}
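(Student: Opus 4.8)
The plan is to reduce the corollary directly to Theorem~\ref{5} by exploiting the fact, already recorded in the paper, that derivations of $W(2)$ restrict to derivations of the subalgebras $S_2$ (spanned by $e_1,\dots,e_4$) and $W_2$ (spanned by $e_1,\dots,e_6$). The only genuine content to check is that a local derivation of the \emph{smaller} algebra extends to a local derivation of $W(2)$, or — what is cleaner — that the argument of Theorem~\ref{5} goes through verbatim when all indices are restricted to the relevant range.

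First I would note that, by Lemma~\ref{3}, a derivation $D$ of $W(2)$ has block-triangular form supported on the coordinates $e_1,\dots,e_4$ only within the $S_2$-block and on $e_1,\dots,e_6$ within the $W_2$-block; in particular the restriction $D|_{S_2}$ is precisely an arbitrary derivation of $S_2$, and likewise for $W_2$. Conversely, every derivation of $S_2$ (resp.\ $W_2$) arises this way, because the parameters $\alpha,\beta$ in \eqref{(1.3)} are unconstrained. So a linear map $\nabla\colon S_2\to S_2$ is a local derivation of $S_2$ if and only if for each $x\in S_2$ there exist $a_x,b_x\in\mathbb{F}$ with $\nabla(x)$ given by the top-left $4\times4$ block of the matrix in Theorem~\ref{5} applied to $\bar x=(x_1,x_2,x_3,x_4,0,0,0,0)$.

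Next, running the computation from the proof of Theorem~\ref{5} but keeping only the coordinates present in $S_2$: plugging $x=e_1,e_2,e_3,e_4$ kills all entries $b_{i,j}$ with $j\le 4$ except those appearing in the first four equations of system~\eqref{c}, and then the products $x_1x_2$, $x_2x_3$ force $b_{3,1}=2b_{1,2}$, $b_{4,3}=3b_{1,2}$, $b_{2,2}=-b_{3,3}$, $b_{4,4}=-2b_{2,2}$, exactly as before. (The entries $b_{i,j}$ with $i>4$ are automatically zero because $\nabla$ maps $S_2$ to $S_2$; the entries with $j>4$ never enter.) Hence the matrix of $\nabla$ on $S_2$ is the top-left $4\times4$ block of \eqref{(1.3)}, so $\nabla$ is a derivation by Lemma~\ref{3}. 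The argument for $W_2$ is identical, now also retaining coordinates $5,6$ and the fifth equation of \eqref{c}, which yields $b_{6,5}=-b_{1,2}$ and $b_{6,6}=-b_{2,2}$ using the products $x_2x_5$, $x_2x_6$ exactly as in the theorem.

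The only point that needs a word of care — and the closest thing to an obstacle — is ensuring that no relation used in Theorem~\ref{5} secretly required a basis vector $e_7$ or $e_8$: inspection of the proof shows that the coefficients controlled by $e_7,e_8$ (namely $b_{7,7},b_{7,8},b_{8,\cdot}$) are exactly the ones that do \emph{not} belong to the $S_2$- or $W_2$-block, so they are irrelevant to the restricted statement, and every relation among the surviving coefficients was derived using only products of $e_1,e_2,e_3$ (and, for $W_2$, $e_5,e_6$) — all of which lie inside the respective subalgebra. Thus the restriction causes no loss, and the corollary follows.
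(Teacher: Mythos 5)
Your argument is correct and is essentially the paper's route made explicit: the paper dispatches the corollary in one sentence by observing that every derivation of $W(2)$ of the form \eqref{(1.3)} leaves $S_2=\langle e_1,\dots,e_4\rangle$ and $W_2=\langle e_1,\dots,e_6\rangle$ invariant, and you supply the restricted version of the Theorem~\ref{5} computation that the paper leaves implicit; your check that the relations $b_{3,1}=2b_{1,2}$, $b_{4,3}=3b_{1,2}$, $b_{2,2}=-b_{3,3}$, $b_{4,4}=-2b_{2,2}$ (and, for $W_2$, $b_{6,5}=-b_{1,2}$, $b_{6,6}=-b_{2,2}$) use only coordinates lying inside the subalgebra is exactly the point that makes the reduction legitimate. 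One step deserves a better justification than you give it: the claim that \emph{every} derivation of $S_2$ (resp.\ $W_2$) is the restriction of a derivation of $W(2)$ does not follow from ``the parameters $\alpha,\beta$ in \eqref{(1.3)} are unconstrained'' --- that only shows the restrictions form a two-parameter family of derivations of the subalgebra, not that there are no others. If $S_2$ had additional derivations not of the block form, a local derivation of $S_2$ could be implemented pointwise by those, and the restricted computation would not apply. The fact is true, but it requires the description of the derivation algebras of $S_2$ and $W_2$ given in \cite{KLP} (the same source as Lemma~\ref{3}); the paper's own one-line proof silently relies on the same fact, so this is a shared omission rather than an error peculiar to your argument --- just replace the ``unconstrained parameters'' remark with a citation.
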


\section{$2$-Local derivations of conservative algebras of $2$-dimensional algebras}

In this section we give another characterization of derivations on conservative algebras of $2$-dimensional algebras.

A (not necessary linear) map $\Delta : \mathcal{A}\to \mathcal{A}$ is called a $2$-local derivation, if for
any elements $x$, $y\in \mathcal{A}$ there exists a derivation
$D_{x,y}:\mathcal{A}\to \mathcal{A}$ such that $\Delta (x)=D_{x,y}(x)$, $\Delta (y)=D_{x,y}(y)$.

\begin{theorem} \label{6}
Every $2$-local derivation of the algebras $S_2$, $W_2$ and $W(2)$ is a derivation.
\end{theorem}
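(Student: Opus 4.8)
The plan is to follow the standard strategy for $2$-local maps: fix a $2$-local derivation $\Delta$ and show that it coincides on a spanning set with a single derivation, then conclude linearity and the derivation property. First I would evaluate $\Delta$ on each basis vector $e_i$ ($i=1,\dots,8$). For each $i$, pairing $e_i$ with a second, cleverly chosen basis vector $e_j$ produces a derivation $D_{e_i,e_j}$ with $\Delta(e_i)=D_{e_i,e_j}(e_i)$; since every derivation has the matrix form \eqref{(1.3)} from Lemma~\ref{3}, $\Delta(e_i)$ lies in the $i$-th column span of that matrix, i.e.\ $\Delta(e_i)$ is a known linear combination of $e_1,\dots,e_8$ with coefficients built from the two parameters $\alpha_i,\beta_i\in\mathbb F$ attached to $D_{e_i,e_j}$. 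Reading off \eqref{(1.3)}: $\Delta(e_1)=2\alpha_1 e_3$, $\Delta(e_2)=\alpha_2 e_1-\beta_2 e_2$, $\Delta(e_3)=\beta_3 e_3+3\alpha_3 e_4$, $\Delta(e_4)=2\beta_4 e_4$, $\Delta(e_5)=-\alpha_5 e_6$, $\Delta(e_6)=\beta_6 e_6$, $\Delta(e_7)=\beta_7 e_7$, $\Delta(e_8)=\alpha_8 e_7$, while $\Delta(e_4)$ will actually be forced to $0$ and similarly several of the others vanish because $e_4$ and $e_5$ are annihilated in the relevant column positions.

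The core of the argument is to show all these local parameters agree, i.e.\ there exist $\alpha,\beta\in\mathbb F$ with $\alpha_i=\alpha$, $\beta_i=\beta$ for all $i$. Here I would exploit pairs $(e_i,e_j)$ for which both $\Delta(e_i)$ and $\Delta(e_j)$ must be simultaneously realized by one derivation $D_{e_i,e_j}$: writing that derivation's parameters as $(\alpha,\beta)$, the conditions $\Delta(e_i)=D_{e_i,e_j}(e_i)$ and $\Delta(e_j)=D_{e_i,e_j}(e_j)$ force $\alpha_i=\alpha_j=\alpha$ and $\beta_i=\beta_j=\beta$ whenever the $i$-th and $j$-th columns of \eqref{(1.3)} each genuinely involve both parameters, or by a chaining argument through intermediate indices when a single pair only links some of the parameters. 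For instance, $e_2$ carries both $\alpha$ and $\beta$ data, so pairing $e_2$ with each of $e_1,e_3,e_5,e_7,e_8$ in turn propagates the common values $\alpha,\beta$ across all columns; the degenerate columns ($e_4$, $e_5$) impose no new constraint. After this, $\Delta$ agrees with the derivation $D$ of form \eqref{(1.3)} with parameters $\alpha,\beta$ on the whole basis $e_1,\dots,e_8$.

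Finally, since $\Delta$ and $D$ agree on a basis, and $D$ is linear, it remains only to check that $\Delta$ itself is linear — but a $2$-local derivation need not be linear a priori, so one must verify $\Delta(x)=D(x)$ for an \emph{arbitrary} $x=\sum x_i e_i$, not merely on the basis. For a general $x$, I would pair $x$ with a fixed element (say $e_2$, or whichever basis element witnessed all the parameter constraints) to get a derivation $D_{x,e_2}$ with parameters $(\alpha',\beta')$ satisfying $\Delta(e_2)=D_{x,e_2}(e_2)$; comparing with $\Delta(e_2)=D(e_2)$ and using that the $e_2$-column of \eqref{(1.3)} pins down both parameters gives $\alpha'=\alpha$, $\beta'=\beta$, hence $\Delta(x)=D_{x,e_2}(x)=D(x)$. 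Thus $\Delta=D$ is a derivation. The same computation restricts verbatim to the subalgebras $S_2$ (spanned by $e_1,\dots,e_4$) and $W_2$ (spanned by $e_1,\dots,e_6$), since the derivation form \eqref{(1.3)} preserves these subspaces; I expect the main technical obstacle to be the bookkeeping in the chaining step — choosing, for each pair of indices, a partner basis vector whose column in \eqref{(1.3)} is nondegenerate in the right parameter, and handling the genuinely degenerate columns $e_4,e_5$ without circular reasoning.
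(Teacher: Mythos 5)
Your final paragraph is exactly the paper's proof: pair an arbitrary element $x$ with the fixed vector $e_2$, observe that the $e_2$-column of \eqref{(1.3)} gives $D(e_2)=\alpha e_1-\beta e_2$ and hence pins down both parameters of any derivation realizing $\Delta(e_2)$, so all the derivations $D_{x,e_2}$ coincide and $\Delta$ equals that single derivation. The basis-vector bookkeeping and chaining in your first two paragraphs are superfluous once this is observed (and the aside that $\Delta(e_4)$ is ``forced to $0$'' is false, since $D(e_4)=2\beta e_4$), but neither point affects the validity of the argument.
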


\begin{proof}
We will prove that every $2$-local derivation of $W(2)$ is a derivation.

Let $\Delta$ be an arbitrary $2$-local derivation of $W(2)$.
T   hen, by the definition, for every element $a\in W(2)$,
there exists a derivation $D_{a,e_2}$ of $W(2)$ such that
$$
\Delta(a)=D_{a,e_2}(a), \,\,\,  \Delta(e_2)=D_{a,e_2}(e_2).
$$
By lemma \ref{3}, the matrix $A^{a,e_2}$ of the derivation $D_{a,e_2}$ has the following matrix form:
\[
A^{a,e_2}=
\left(
    \begin{array}{cccccccc}
      0 & \alpha_{a,e_2} & 0 & 0 & 0 & 0 & 0 & 0\\
      0 & -\beta_{a,e_2} & 0 & 0 & 0 & 0 & 0 & 0\\
      2\alpha_{a,e_2} & 0 & \beta_{a,e_2} & 0 & 0 & 0 & 0 & 0\\
      0 & 0 & 3\alpha_{a,e_2} & 2\beta_{a,e_2} & 0 & 0 & 0 & 0 \\
      0 & 0 & 0 & 0 & 0 & 0 & 0 & 0\\
      0 & 0 & 0 & 0 & -\alpha_{a,e_2} & \beta_{a,e_2} & 0 & 0\\
      0 & 0 & 0 & 0 & 0 & 0 & \beta_{a,e_2} & \alpha_{a,e_2}\\
      0 & 0 & 0 & 0 & 0 & 0 & 0 & 0\\
      \end{array}
  \right).
\]

Let $v$ be an arbitrary element in $W(2)$. Then
there exists a derivation $D_{v,e_2}$ of $W(2)$ such that
$$
\Delta(v)=D_{v,e_2}(v), \,\,\,  \Delta(e_2)=D_{v,e_2}(e_2).
$$
By lemma \ref{3}, the matrix $A^{v,e_2}$ of the derivation $D_{v,e_2}$ has the following matrix form:
\[
A^{v,e_2}=
\left(
    \begin{array}{cccccccc}
      0 & \alpha_{v,e_2} & 0 & 0 & 0 & 0 & 0 & 0\\
      0 & -\beta_{v,e_2} & 0 & 0 & 0 & 0 & 0 & 0\\
      2\alpha_{v,e_2} & 0 & \beta_{v,e_2} & 0 & 0 & 0 & 0 & 0\\
      0 & 0 & 3\alpha_{v,e_2} & 2\beta_{v,e_2} & 0 & 0 & 0 & 0 \\
      0 & 0 & 0 & 0 & 0 & 0 & 0 & 0\\
      0 & 0 & 0 & 0 & -\alpha_{v,e_2} & \beta_{v,e_2} & 0 & 0\\
      0 & 0 & 0 & 0 & 0 & 0 & \beta_{v,e_2} & \alpha_{v,e_2}\\
      0 & 0 & 0 & 0 & 0 & 0 & 0 & 0\\
      \end{array}
  \right).
\]
Since $\Delta(e_2)=D_{a,e_2}(e_2)=D_{v,e_2}(e_2)$, we have
\[
\alpha_{a,e_2}=\alpha_{v,e_2}, \beta_{a,e_2}=\beta_{v,e_2},
\]
that it
\[
D_{v,e_2}=D_{a,e_2}.
\]
Therefore, for any element $a$ of the algebra $W(2)$
\[
\Delta(a)=D_{v,e_2}(a),
\]
that it $D_{v,e_2}$ does not depend on $a$. Hence, $\Delta$
is a derivation by lemma \ref{(1.3)}.

The cases of the algebras $S_2$ and $W_2$ are also similarly proved.
This ends the proof.
\end{proof}

\section{2-Local automorphisms of conservative algebras of 2-dimensional algebras}

A (not necessary linear) map $\Delta : \mathcal{A}\to \mathcal{A}$ is called a $2$-local automorphism, if for
any elements $x$, $y\in \mathcal{A}$ there exists an automorphism
$\phi_{x,y}:\mathcal{A}\to \mathcal{A}$ such that $\Delta (x)=\phi_{x,y}(x)$, $\Delta (y)=\phi_{x,y}(y)$.

\begin{theorem} \label{4}
Every $2$-local automorphism of the algebras $S_2$, $W_2$ and $W(2)$ is an automorphism.
\end{theorem}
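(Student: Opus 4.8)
The plan is to imitate the proof of Theorem~\ref{6}, replacing Lemma~\ref{3} by Lemma~\ref{21}. Let $\Delta$ be a $2$-local automorphism of $W(2)$. The idea is to use $e_2$ as a fixed ``test vector'': for every $a\in W(2)$ there is an automorphism $\phi_{a,e_2}$ with $\Delta(a)=\phi_{a,e_2}(a)$ and $\Delta(e_2)=\phi_{a,e_2}(e_2)$, and by Lemma~\ref{21} this $\phi_{a,e_2}$ is completely determined by two scalars, which I will call $p_a$ and $q_a$ (with $q_a\neq 0$), through the matrix displayed there.

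First I would compute $\phi_{a,e_2}(e_2)$. Since $e_2$ corresponds to the coordinate vector $(0,1,0,0,0,0,0,0)$, its image under $\phi_{a,e_2}$ is just the second column of the matrix of Lemma~\ref{21}, namely
\[
\phi_{a,e_2}(e_2)=\Bigl(p_a,\ \tfrac{1}{q_a},\ p_a^{2}q_a,\ p_a^{3}q_a^{2},\ 0,\ 0,\ 0,\ 0\Bigr).
\]
The crucial point is that the left-hand side equals $\Delta(e_2)$, which does not depend on $a$. Reading off the first coordinate gives $p_a=(\Delta(e_2))_1$ and reading off the second gives $q_a=1/(\Delta(e_2))_2$; note that $(\Delta(e_2))_2\neq 0$ automatically, since $1/q_a$ is never zero, so no division problem arises. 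Hence $p_a$ and $q_a$ are independent of $a$; write them as $p,q$. Therefore all the automorphisms $\phi_{a,e_2}$ coincide with a single automorphism $\phi$, namely the one given by the matrix of Lemma~\ref{21} with parameters $a=p$, $b=q$.

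Consequently, for every $a\in W(2)$ we get $\Delta(a)=\phi_{a,e_2}(a)=\phi(a)$, so $\Delta=\phi$ is an automorphism. For $S_2$ and $W_2$ I would run the identical argument, again using $e_2$ as the test vector and the analogue of Lemma~\ref{21} for these subalgebras (the matrix of an automorphism being the restriction of the one in Lemma~\ref{21} to the span of $e_1,\dots,e_4$, respectively $e_1,\dots,e_6$, a two-parameter description available in \cite{KV}); the image of $e_2$ still has first two coordinates $p$ and $1/q$, so the same determination of the parameters goes through and forces all locally chosen automorphisms to agree.

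I do not expect a genuine obstacle here: the whole argument rests on the single elementary observation that the image of $e_2$ under an automorphism already exhibits both free parameters $a$ and $1/b$ in its first two coordinates, so one test element suffices to pin down the locally chosen automorphisms uniquely. The only points that need a moment's care are checking that the condition $b\neq 0$ causes no division problem (it does not) and confirming that the automorphism groups of $S_2$ and $W_2$ admit the same two-parameter description as in Lemma~\ref{21}; both are routine given the cited results.
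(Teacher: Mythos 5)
Your proposal is correct and follows essentially the same route as the paper: fix $e_2$ as the test element, observe that $\phi_{x,e_2}(e_2)$ is the second column of the matrix in Lemma~\ref{21} and hence its first two coordinates determine both parameters, and conclude that all the locally chosen automorphisms coincide. Your version is in fact slightly more explicit than the paper's in spelling out why the first two coordinates of $\Delta(e_2)$ pin down $a$ and $b$ uniquely.
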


\begin{proof}
We prove that every $2$-local automorphism of $W(2)$ is an automorphism.

Let $\Delta$ be an arbitrary $2$-local automorphism of $W(2)$.
Then, by the definition, for every element $x\in W(2)$,
\[
x=x_1e_1+x_2e_2+x_3e_3+x_4e_4+x_5e_5+x_6e_6+x_7e_7+x_8e_8,
\]
there exist elements $a_{x,e_2}$, $b_{x,e_2}$ such that
\[
A_{x,e_2}=
\left(
    \begin{array}{cccccccc}
      1 & a_{x,e_2} & 0 & 0 & 0 & 0 & 0 & 0\\
      0 & \frac{1}{b_{x,e_2}} & 0 & 0 & 0 & 0 & 0 & 0\\
      2a_{x,e_2}b_{x,e_2} & a_{x,e_2}^2b_{x,e_2} & b_{x,e_2} & 0 & 0 & 0 & 0 & 0\\
      3a_{x,e_2}^2b_{x,e_2}^2 & a_{x,e_2}^3b_{x,e_2}^2 & 3a_{x,e_2}b_{x,e_2}^2 & b_{x,e_2}^2 & 0 & 0 & 0 & 0 \\
      0 & 0 & 0 & 0 & 1 & 0 & 0 & 0\\
      0 & 0 & 0 & 0 & -a_{x,e_2}b_{x,e_2} & b_{x,e_2} & 0 & 0\\
      0 & 0 & 0 & 0 & 0 & 0 & b_{x,e_2} & a_{x,e_2}b_{x,e_2}\\
      0 & 0 & 0 & 0 & 0 & 0 & 0 & 1\\
      \end{array}
  \right),
\]
$\Delta(x)=A_{x,e_2}\bar{x}$, where $\bar{x}=(x_1,x_2,x_3,x_4,x_5,x_6,x_7,x_8)$ is the vector corresponding to $x$, and
\[
\Delta(e_2)=A_{x,e_2}e_2=
(a_{x,e_2}, \frac{1}{b_{x,e_2}}, a_{x,e_2}^2b_{x,e_2},  a_{x,e_2}^3b_{x,e_2}^2, 0, 0, 0, 0).
\]
Since the element $x$ was chosen arbitrarily, we have
\[
\Delta(e_2)=(a_{x,e_2}, \frac{1}{b_{x,e_2}}, a_{x,e_2}^2b_{x,e_2}, a_{x,e_2}^3b_{x,e_2}^2, 0, 0, 0, 0)
\]
\[
=(a_{y,e_2}, \frac{1}{b_{y,e_2}}, a_{y,e_2}^2b_{y,e_2}, a_{y,e_2}^3b_{y,e_2}^2, 0, 0, 0, 0),
\]
for each pair $x$, $y$ of elements in $W(2)$.
Hence, $a_{x,e_2}=a_{y,e_2}$, $b_{x,e_2}=b_{y,e_2}$.
Therefore
\[
\Delta(x)=A_{y,e_2}x
\]
for any $x\in W(2)$ and the matrix $A_{y,e_2}$ does not depend on $x$.
Thus, by lemma \ref{21} $\Delta$ is an automorphism.

The cases of the algebras $S_2$ and $W_2$ are also similarly proved.
The proof is complete.
\end{proof}

\section{Local automorphisms of conservative algebras of 2-dimensional algebras}

Let $\mathcal{A}$ be an algebra.
A linear map $\nabla : \mathcal{A}\to \mathcal{A}$ is called a local automorphism, if for
any element $x\in \mathcal{A}$ there exists an automorphism
$\phi_x:\mathcal{A}\to \mathcal{A}$ such that $\nabla(x)=\phi_x(x)$.

\begin{theorem}  \label{5}
Every local automorphism of the algebras $S_2$, $W_2$ and $W(2)$ is an automorphism.
\end{theorem}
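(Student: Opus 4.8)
I would follow the strategy of the local-derivation result, but replace the multiplication-table manipulations by a direct exploitation of the linearity of $\nabla$ together with the rigid column structure of the automorphism matrices in Lemma~\ref{21}. Write $\nabla(x)=B\bar x$ for a fixed $8\times 8$ matrix $B=(b_{i,j})$, and for $a\in\mathbb F$, $b\in\mathbb F\setminus\{0\}$ let $A(a,b)$ denote the matrix displayed in Lemma~\ref{21}. First evaluate $\nabla$ on the basis: for each $i$ there is an automorphism $\phi_{e_i}$ with $\nabla(e_i)=\phi_{e_i}(e_i)$, so by Lemma~\ref{21} the $i$-th column of $B$ equals the $i$-th column of $A(a_i,b_i)$ for suitable $a_i\in\mathbb F$ and $b_i\neq 0$. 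In particular $B$ already has the shape of Lemma~\ref{21}: its only possibly nonzero entries are $b_{1,1}=b_{5,5}=b_{8,8}=1$ together with $b_{1,2},b_{2,2},b_{3,1},b_{3,2},b_{3,3},b_{4,1},b_{4,2},b_{4,3},b_{4,4},b_{6,5},b_{6,6},b_{7,7},b_{7,8}$, each written through the parameters $a_i,b_i$ of its own column. What remains is to show that one pair $(a,b)$ governs all eight columns simultaneously, the natural candidate being $(a_2,b_2)$, attached to the column of $e_2$.

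The key point is that $e_2$ can be used as a pivot: columns $3,\dots,8$ of $A(a,b)$ vanish in rows $1$ and $2$, and column $1$ vanishes in row $2$. Hence for $i\in\{3,\dots,8\}$ linearity gives that the first two coordinates of $\nabla(e_2+e_i)=\nabla(e_2)+\nabla(e_i)$ are $a_2$ and $1/b_2$; but also $\nabla(e_2+e_i)=\phi_{e_2+e_i}(e_2+e_i)$, and since the $(1,2)$ and $(2,2)$ entries of $\phi_{e_2+e_i}$ are precisely its parameters $a$ and $1/b$, this forces $\phi_{e_2+e_i}=A(a_2,b_2)$. Therefore $\nabla(e_2+e_i)$ is the sum of columns $2$ and $i$ of $A(a_2,b_2)$; subtracting $\nabla(e_2)=\phi_{e_2}(e_2)$, which is column $2$ of $A(a_2,b_2)$, shows that column $i$ of $B$ equals column $i$ of $A(a_2,b_2)$. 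The same scheme applied to $e_1+e_2$ — where vanishing of the row-$2$ entry of column $1$ first pins $b$ down and then the row-$1$ entries pin $a$ down — settles column $1$. As column $2$ is already column $2$ of $A(a_2,b_2)$ and $b_2\neq 0$, we conclude $B=A(a_2,b_2)$, so $\nabla$ is an automorphism by Lemma~\ref{21}. The cases $S_2$ and $W_2$ go through verbatim, since $e_2$ lies in both and every automorphism $A(a,b)$ of $W(2)$ restricts to an automorphism of $S_2$ (spanned by $e_1,\dots,e_4$) and of $W_2$ (spanned by $e_1,\dots,e_6$).

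The step needing the most care is the column-by-column bookkeeping in the pivot argument: one must check in each case that evaluating $\nabla$ on the single extra vector $e_2+e_i$ genuinely determines the auxiliary parameters, which relies on the $(1,2)$ and $(2,2)$ entries of the matrix in Lemma~\ref{21} being exactly $a$ and $1/b$ (with $b\neq 0$, so $1/b$ is meaningful) and on all other rows of columns $i\geq 3$ contributing nothing to the first two coordinates. Beyond that, the proof uses only linearity of $\nabla$ and the explicit form of automorphism matrices — no part of the multiplication table of $W(2)$ is required.
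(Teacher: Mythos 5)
Your proof is correct and follows essentially the same route as the paper's: both write $\nabla$ as a matrix $B$, use Lemma~\ref{21} to identify each column of $B$ with the corresponding column of some automorphism matrix $A(a_i,b_i)$, and then apply linearity of $\nabla$ to sums of two basis vectors to force all the parameters to coincide with a single pair $(a_{e_2},b_{e_2})$. Your systematic use of $e_2$ as a pivot (reading off $a$ and $1/b$ from the first two coordinates of $\nabla(e_2+e_i)$) is a cleaner piece of bookkeeping than the paper's ad hoc collection of pairs $e_6+e_7$, $e_5+e_8$, $e_4+e_6$, $e_2+e_8$, $e_2+e_3$, $e_1+e_8$, but the underlying idea is identical.
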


\begin{proof}
We prove that every local automorphism of $W(2)$ is an automorphism.

Let $\nabla$ be an arbitrary local automorphism of $W(2)$ and $B$ be its matrix, i.e.,
\[
\nabla(x)=B\bar{x}, x\in W(2),
\]
where $\bar{x}$ is the vector corresponding to $x$.
Then, by the definition, for every element $x\in W(2)$,
\[
x=x_1e_1+x_2e_2+x_3e_3+x_4e_4+x_5e_5+x_6e_6+x_7e_7+x_8e_8,
\]
there exist elements $a_x$, $b_x$ such that
\[
A_x=
\left(
    \begin{array}{cccccccc}
      1 & a_x & 0 & 0 & 0 & 0 & 0 & 0\\
      0 & \frac{1}{b_x} & 0 & 0 & 0 & 0 & 0 & 0\\
      2a_xb_x & a_x^2b_x & b_x & 0 & 0 & 0 & 0 & 0\\
      3a_x^2b_x^2 & a_x^3b_x^2 & 3a_xb_x^2 & b_x^2 & 0 & 0 & 0 & 0 \\
      0 & 0 & 0 & 0 & 1 & 0 & 0 & 0\\
      0 & 0 & 0 & 0 & -a_xb_x & b_x & 0 & 0\\
      0 & 0 & 0 & 0 & 0 & 0 & b_x & a_xb_x\\
      0 & 0 & 0 & 0 & 0 & 0 & 0 & 1\\
      \end{array}
  \right)
\]
and
\[
\nabla(x)=B\bar{x}=A_x\bar{x}.
\]
Using these equalities and by choosing subsequently $x=e_1$, $x=e_2$, $\dots, x=e_8$ we get
\[
B=
\left(
    \begin{array}{cccccccc}
      1 & a_{e_2} & 0 & 0 & 0 & 0 & 0 & 0\\
      0 & \frac{1}{b_{e_2}} & 0 & 0 & 0 & 0 & 0 & 0\\
      2a_{e_1}b_{e_1} & a_{e_2}^2b_{e_2} & b_{e_3} & 0 & 0 & 0 & 0 & 0\\
      3a_{e_1}^2b_{e_1}^2 & a_{e_2}^3b_{e_2}^2 & 3a_{e_3}b_{e_3}^2 & b_{e_4}^2 & 0 & 0 & 0 & 0 \\
      0 & 0 & 0 & 0 & 1 & 0 & 0 & 0\\
      0 & 0 & 0 & 0 & -a_{e_5}b_{e_5} & b_{e_6} & 0 & 0\\
      0 & 0 & 0 & 0 & 0 & 0 & b_{e_7} & a_{e_8}b_{e_8}\\
      0 & 0 & 0 & 0 & 0 & 0 & 0 & 1\\
      \end{array}
  \right).
\]
Since $\nabla(e_6+e_7)=\nabla(e_6)+\nabla(e_7)$, we have
$$
b_{e_6+e_7}=b_{e_6}, b_{e_6+e_7}=b_{e_7}.
$$
Hence,
$$
b_{e_6}=b_{e_7}.
$$
Similarly to this equality we get $b_{e_3}=b_{e_6}$ and $b_{e_6}=b_{e_2}\neq 0$.
Hence,
\begin{equation} \label{41}
b_{e_2}=b_{e_3}=b_{e_6}=b_{e_7}.
\end{equation}

Since $\nabla(e_5+e_8)=\nabla(e_5)+\nabla(e_8)$, we have
$$
a_{e_5+e_8}b_{e_5+e_8}=a_{e_5}b_{e_5}, \,\, a_{e_5+e_8}b_{e_5+e_8}=a_{e_8}b_{e_8}.
$$
From this it follows that
$$
a_{e_5}b_{e_5}=a_{e_8}b_{e_8}.
$$
Similarly to this equality we get $a_{e_1}b_{e_1}=a_{e_8}b_{e_8}$.
Hence,
\begin{equation} \label{42}
a_{e_1}b_{e_1}=a_{e_5}b_{e_5}=a_{e_8}b_{e_8}.
\end{equation}

Since $\nabla(e_4+e_6)=\nabla(e_4)+\nabla(e_6)$, we have
\[
b_{e_4+e_6}^2=b_{e_4}^2, \,\, b_{e_4+e_6}^2=b_{e_6}^2.
\]
From this it follows that
\[
b_{e_4}^2=b_{e_6}^2.
\]
Hence, by (\ref{41}), we get
\begin{equation} \label{43}
b_{e_4}^2=b_{e_2}^2.
\end{equation}

Since $\nabla(e_2+e_8)=\nabla(e_2)+\nabla(e_8)$, we have
\[
a_{e_2}=a_{e_2+e_8}, \,\,\, a_{e_2+e_8}^2b_{e_2+e_8}=a_{e_2}^2b_{e_2}, \,\,\, a_{e_2+e_8}b_{e_2+e_8}=a_{e_8}b_{e_8}.
\]
Hence,
\[
b_{e_2+e_8}=b_{e_2}, \,\,\, a_{e_2+e_8}b_{e_2+e_8}=a_{e_2}b_{e_2}
\]
and, therefore,
\begin{equation} \label{44}
a_{e_2}b_{e_2}=a_{e_8}b_{e_8}.
\end{equation}

Similarly, since $\nabla(e_2+e_3)=\nabla(e_2)+\nabla(e_3)$, we have
\[
a_{e_2}=a_{e_2+e_3}, \,\,\, b_{e_2}^{-1}=b_{e_2+e_3}^{-1}, \,\,\,
a_{e_2+e_3}^3b_{e_2+e_3}^2+3a_{e_2+e_3}b_{e_2+e_3}^2=a_{e_2}^3b_{e_2}^2+3a_{e_3}b_{e_3}^2.
\]
Hence,
\[
b_{e_2}=b_{e_2+e_3}
\]
and by (\ref{41}) and $a_{e_2}=a_{e_2+e_3}$ we get
\[
a_{e_2}^3+3a_{e_2}=a_{e_2}^3+3a_{e_3}.
\]
Therefore, $a_{e_2}=a_{e_3}$ and
\begin{equation} \label{45}
a_{e_2}b_{e_2}^2=a_{e_3}b_{e_3}^2.
\end{equation}

Finally, since $\nabla(e_1+e_8)=\nabla(e_1)+\nabla(e_8)$, we have
$$
a_{e_1+e_8}b_{e_1+e_8}=a_{e_1}b_{e_1}, \,\,\, a_{e_1+e_8}b_{e_1+e_8}=a_{e_8}b_{e_8}.
$$
Hence,
$$
a_{e_1}b_{e_1}=a_{e_8}b_{e_8}.
$$
By (\ref{44}), from the last equalities it follows that
\begin{equation}  \label{46}
a_{e_1}b_{e_1}=a_{e_2}b_{e_2}, \,\,\, a_{e_1}^2b_{e_1}^2=(a_{e_1}b_{e_1})^2=(a_{e_2}b_{e_2})^2=a_{e_2}^2b_{e_2}^2.
\end{equation}

By (\ref{41}), (\ref{42}), (\ref{43}), (\ref{44}), (\ref{45}), (\ref{46}) the matrix $B$ has the following matrix form
\[
B=
\left(
    \begin{array}{cccccccc}
      1 & a_{e_2} & 0 & 0 & 0 & 0 & 0 & 0\\
      0 & \frac{1}{b_{e_2}} & 0 & 0 & 0 & 0 & 0 & 0\\
      2a_{e_2}b_{e_2} & a_{e_2}^2b_{e_2} & b_{e_2} & 0 & 0 & 0 & 0 & 0\\
      3a_{e_2}^2b_{e_2}^2 & a_{e_2}^3b_{e_2}^2 & 3a_{e_2}b_{e_2}^2 & b_{e_2}^2 & 0 & 0 & 0 & 0 \\
      0 & 0 & 0 & 0 & 1 & 0 & 0 & 0\\
      0 & 0 & 0 & 0 & -a_{e_2}b_{e_2} & b_{e_2} & 0 & 0\\
      0 & 0 & 0 & 0 & 0 & 0 & b_{e_2} & a_{e_2}b_{e_2}\\
      0 & 0 & 0 & 0 & 0 & 0 & 0 & 1\\
      \end{array}
  \right).
\]
Hence, by lemma \ref{21}, the local automorphism $\nabla$ is an automorphism.

The cases of the algebras $S_2$ and $W_2$ are also similarly proved.
This ends the proof.
\end{proof}

The authors thank professor Ivan Kaygorodov for detailed reading of this work and for
suggestions which improved the paper.

\end{document}